\newtheorem{theorem}{Theorem}[section]
\newtheorem{corollary}[theorem]{Corollary}
\newtheorem{lemma}[theorem]{Lemma}
\theoremstyle{definition}
\newtheorem{definition}[theorem]{Definition}
\title{A characterization of the Filippov convention}
\author{Tomoharu Suda}
\address{Faculty of Mathematics, Keio University}
\email{tomoharu.suda@keio.jp}
\begin{document}
\begin{abstract}
The Filippov convention is widely used in the literature to define vector fields on a discontinuity set of piecewise-continuous vector fields. The aim of this paper is to give a sufficient and necessary condition for an interpolation scheme of piecewise-continuous vector fields to coincide with the Filippov convention. That is, we show that a map from a space of piecewise-continuous vector fields with two components to the space of vector fields coincides with the Filippov convention where the latter can be applied, if it is sufficiently well-behaved as a generalization of continuous vector fields. \end{abstract}

\maketitle
\section{Introduction}
 Piecewise-continuous vector fields arise in many applications and have attracted significant attention for several decades. Nontrivial phenomena such as slip-stick motion occur on a discontinuity set, and we are interested in studying the behavior of a system in a neighborhood of such points \cite{makarenkov2012dynamics}. Vector fields are, of course, not defined on the discontinuity set. One way to overcome this difficulty is to extend the definition of solutions. An overview of the studies along this line can be found in \cite{cortes2008discontinuous, polyakov2014stability}. Alternatively, one may consider an interpolation of a piecewise-continuous vector field and analyze the original system using the completed vector field. This approach has been taken in \cite{jeffrey2014hidden, jeffrey2018hidden}, for example. For considerations of this type, the problem of interpolation may be formulated as follows:

\textit{Let $PC(\mathbb{R}^n)$ be the space of all piecewise-continuous vector fields with two components. That is, $PC(\mathbb{R}^n)$ is the set of all triplets
$(X_1,X_2, \Sigma)$ where 
\begin{enumerate}
	\item $\Sigma$ is an $n-1$ dimensional smooth connected submanifold of $\mathbb{R}^n$, partitioning $\mathbb{R}^n$ into two nonempty regular closed sets $S_1$ and $S_2$ with disjoint interiors such that $S_1 \cap S_2 = \Sigma.$ The set $\Sigma$ is called the \emph{discontinuity manifold} of $(X_1,X_2, \Sigma).$
	\item $X_i$ is a continuous vector field defined on $S_i$ for $i=1,2.$
\end{enumerate}
 Construct a (partial) map $\eta$ from $PC(\mathbb{R}^n)$ to the space of possibly discontinuous vector fields $Map(\mathbb{R}^n, \mathbb{R}^n)$ so that $\eta(X_1,X_2, \Sigma)(x)= X_i(x)$ if $x \in \mathrm{int}\,S_i$ and $\eta(X , X, \Sigma)= X$ if $X$ is a continuous vector field.}

A solution $\eta$ to this problem will be called an \emph{interpolation scheme}. We permit an interpolation scheme to be invalid for some piecewise-continuous vector fields in $PC(\mathbb{R}^n),$ to accommodate for the possibility that the interpolation requires some regularity conditions.

One interpolation scheme widely used in the literature is Filippov's method based on convex combinations \cite{filippov, bernardo2008piecewise}. Although it was first formulated as a method to interpret piecewise-continuous vector fields as differential inclusions, it is now mainly used as a means of assigning a vector field to a discontinuity set. 
An interpolation scheme of the Filippov type can be defined as follows.
\begin{definition}
	An interpolation scheme $\eta$ is \emph{Filippov type} if 
	\begin{equation}\label{fil_def}
		\eta(X_1,X_2,\Sigma)(x) = \frac{X_{2N}(x)}{X_{2N}(x)-X_{1N}(x)} X_1(x)+ \frac{X_{1N}(x)}{X_{1N}(x)-X_{2N}(x)} X_2(x),
	\end{equation}
	for $x \in \Sigma$ with $X_{1N}(x) X_{2N}(x)<0.$ Here $X_{iN}$ is the normal component of $X_i$ defined by
	\[
		X_{iN}(x) := X_i(x) \cdot \mathbf{n}(x),
	\]
	where $ \mathbf{n}(x)$ is the unit normal vector of $\Sigma$ at $x.$ Note that the right-hand side of (\ref{fil_def}) does not depend on the choice of direction of $\mathbf{n}.$ 
	\end{definition}

The aim of this article is to show that a sufficiently well-behaved interpolation scheme of piecewise-continuous vector fields with two components is necessarily a Filippov type. 
Here we prepare some terms to state the main result.

A diffeomorphism $\Phi:\mathbb{R}^n\to \mathbb{R}^n$ defines a change of global coordinates. If $X$ is a vector field on $\mathbb{R}^n,$ it is transformed into $\Phi_* X := d\Phi \circ X \circ \Phi^{-1},$ where $d \Phi$ is the derivative of $\Phi.$ Therefore, a diffeomorphism $\Phi: \mathbb{R}^n \to \mathbb{R}^n$ induces a map $\Phi_*:PC(\mathbb{R}^n) \to PC(\mathbb{R}^n)$ by
\[
	\Phi_*(X_1, X_2, \Sigma) = (\Phi_* X_1, \Phi_* X_2, \Phi(\Sigma)).
\]
An interpolation scheme is \emph{invariant} if 
\[
	\eta\left(\Phi_*(X_1, X_2, \Sigma) \right) = \Phi_*\left(\eta(X_1, X_2, \Sigma)\right) 
\]
for all diffeomorphisms $\Phi:\mathbb{R}^n\to \mathbb{R}^n$ and all $(X_1,X_2, \Sigma) \in PC(\mathbb{R}^n).$ If an interpolation scheme is invariant, we may apply the usual rules of change of variables.

As the aim of interpolation is to obtain a globally defined vector field that respects the behavior of the original piecewise-continuous system, it is useful to require that the invariant sets of the original system be that of the interpolated vector field. An interpolation scheme $\eta$ satisfies the \emph{tangency condition} if $\eta(X_1,X_2,\Sigma)(x)$ is tangent to $\Sigma$ for $x \in \Sigma$ with $X_{1N}(x) X_{2N}(x)<0.$ In this case, the sliding invariant sets of the original system are invariant under the interpolated vector field.

For a chart $(U,\phi)$ of $x \in \Sigma$, each $(X_1,X_2, \Sigma)$ induces a map $\xi^U_{(X_1,X_2, \Sigma)}: \mathbb{R}^{n-1} \to \mathbb{R}^{2n}$ via 
\[
	\xi^{(U,\phi)}_{(X_1,X_2, \Sigma)}(z) = (X_1(\phi^{-1}z), X_2(\phi^{-1}z)).
\]
 We say an interpolation scheme to \emph{be locally uniform} if, for each chart $(U,\phi),$ there is a map $k_{(U,\phi)}:\mathbb{R}^{2n} \to \mathbb{R}^{n}$ such that 
 \[\eta(X_1,X_2, \Sigma) = k_{(U,\phi)}\left(\xi^{(U,\phi)}_{(X_1,X_2, \Sigma)}\circ \phi\right)\]
 on $U.$ We call $k_{(U,\phi)}$ \emph{the expression in local coordinates}. If the expressions in local coordinates can be taken so that it is independent of the choice of charts, we say the interpolation scheme is \emph{globally uniform}. The expression in local coordinates is \emph{smooth} on $D$ if $k_{(U,\phi)}$ is smooth on $D$ for all $(U,\phi).$ If the expression in local coordinates is smooth on $D$, smoothness is preserved by the application of $\eta$ as long as $D$ contains the image of the expression of $(X_1,X_2, \Sigma)$ in local coordinates.

Now we state the main theorem of this article.

\begin{theorem}[Main Theorem]\label{thm_b}
A map $\eta:PC(\mathbb{R}^n) \to Map(\mathbb{R}^n, \mathbb{R}^n)$ is an interpolation scheme of the Filippov type if and only if it satisfies the following conditions:
\begin{enumerate}
	\item $\eta$ is invariant, locally uniform and satisfies tangency condition.
	\item local expressions of $\eta$ are smooth on $D := \{({\bf p},q,{\bf r},s) \in {\mathbb R}^{2n} \mid q s < 0 \}$ and satisfies the following continuity condition: For any ${\bf p} \in \mathbb{R}^{n-1}$ and sequences $\{q_m\}, \{s_m\}$ with $q_m \to 0$, $s_m \to 0$ as $m \to \infty$, and $q_m s_m <0$, we have $\lim_{m \to \infty}k_{(U,\phi)}({\bf p},q_m,{\bf p},s_m) = ({\bf p},0).$ 
\end{enumerate}

\end{theorem}

 By the definition of the interpolation scheme, we have $k_{(U,\phi)}({\bf p},q,{\bf p}, q) = ({\bf p}, q).$ Thus the continuity property of the expression in local coordinates may be regarded as a weak notion of continuity for the interpolation scheme.

\section{Canonical form of interpolation schemes}
In this section, we prove that there is a canonical form of local expressions for locally uniform invariant interpolation schemes.

First we note that locally uniform invariant interpolation schemes are globally uniform because we assume that the discontinuity manifold $\Sigma$ is connected. Indeed, it can be checked easily that $k_{(\phi, U)} =k_{(\psi, V)}$ if $(\phi, U)$ and $(\psi, V)$ are charts of $\Sigma$ with $U\cap V \neq \emptyset.$ 

For a globally uniform interpolation scheme, we denote $\alpha_\Sigma = k_{(\phi, U)}.$

\begin{lemma}\label{lem_coin}
Let $\eta:PC(\mathbb{R}^n) \to Map(\mathbb{R}^n, \mathbb{R}^n)$ be a globally uniform interpolation scheme. If $\Sigma$ are $\Sigma'$ discontinuity manifolds such that $\Sigma \cap \Sigma' $ is an $n-1$ dimensional submanifold, then $\alpha_\Sigma = \alpha_{\Sigma'}.$
\end{lemma}
\begin{proof}
Let $(\phi, U)$ be a chart of $\Sigma \cap \Sigma'.$ Then it is a chart of both $\Sigma$ and $\Sigma'.$ By considering two piecewise-continuous vector fields $(X_1, X_2, \Sigma)$ and $(Y_1, Y_2, \Sigma')$ taking the same value at a point in $U,$ we obtain $\alpha_\Sigma =\alpha_{\Sigma'}.$
\end{proof}
Now we state and prove a theorem concerning a canonical form of local expressions for locally uniform invariant interpolation schemes.
\begin{theorem}\label{thm_canonical}
Let $\eta:PC(\mathbb{R}^n) \to Map(\mathbb{R}^n, \mathbb{R}^n)$ be a locally uniform invariant interpolation scheme. Then there is a map $\alpha: \mathbb{R}^{2n} \to \mathbb{R}^{n}$ such that, for each $x\in \Sigma,$ there exist a neighborhood $U$ of $x$ and a diffeomorphism $\Phi: \mathbb{R}^{n} \to \mathbb{R}^{n} $ such that
\[
	\eta(X_1, X_2, \Sigma)(z) = d(\Phi^{-1}) \alpha\left((d\Phi) X_1(z), (d\Phi )X_2(z)\right),
\]
for all $z \in U \cap \Sigma.$
\end{theorem}
\begin{proof}
Let $x \in \Sigma.$ Because $\Sigma$ is an $n-1$ dimensional submanifold of $\mathbb{R}^n,$ we may find a neighborhood $U_0$ of $x$ and a diffeomorphism $\phi$ such that
$\phi(U_0)$ is the open unit ball and $\phi(U_0\cap \Sigma) \subset P:=\mathbb{R}^{n-1} \times \{0\}.$ We may assume $\phi$ is orientation-preserving. Therefore, we can extend $\phi$ to a diffeomorphism $\Phi: \mathbb{R}^n \to \mathbb{R}^n,$ using Theorem 5.5 in \cite{palais1959natural}. Let $U$ be the neighborhood of $x$ where $\phi$ and $\Phi$ coincide.

Let $\alpha := \alpha_P,$ where the latter is the local expression of $\eta$ for $P.$ For each $(X_1,X_2, \Sigma)$ and $z \in U,$ we may calculate as follows.
\[
	\begin{aligned}
		\eta(X_1, X_2, \Sigma)(z) &= \eta((\Phi^{-1} \circ \Phi)_*X_1,(\Phi^{-1} \circ \Phi)_* X_2, \Phi^{-1} \circ \Phi(\Sigma))(z)\\
							&= (\Phi^{-1} )_* \eta(\Phi_*X_1,\Phi_* X_2, \Phi(\Sigma))(z)\\
							& = d(\Phi^{-1} ) \eta(\Phi_*X_1,\Phi_* X_2, \Phi(\Sigma))(\Phi(z))\\
							& = d(\Phi^{-1} ) \alpha((d \Phi) X_1(z),(d\Phi) X_2 (z)).
	\end{aligned}
\]
Here we used Lemma \ref{lem_coin}.
\end{proof}

Further, if the tangency condition is satisfied and local expressions are smooth, possible forms of the map $\alpha$ are limited.
\begin{corollary}\label{cor_rep}
Let $\eta:PC(\mathbb{R}^n) \to Map(\mathbb{R}^n, \mathbb{R}^n)$ be a locally uniform invariant interpolation scheme satisfying the tangency condition. If local expressions of $\eta$ are $C^1$ on $D := \{({\bf p},q,{\bf r},s) \in {\mathbb R}^{2n} \mid q s < 0 \}$, then the following hold for the map $\alpha$ in Theorem \ref{thm_canonical} on $D$: 

\begin{enumerate}
		\item $ \alpha({\bf p},q,{\bf r},s) = A(q,s){\bf p}+B(q,s){\bf r},$ where $A$ and $B$ are matrix-valued functions that are $0$-homogeneous with respect to $q$ and $s$.
		\item For linearly dependent vectors $({\bf p},q)$ and $({\bf r},s)$, $\alpha({\bf p},q,{\bf r},s) = {\bf 0}.$
	\end{enumerate}
\end{corollary}
\begin{proof}
Let $k>0$ be arbitrary and $A = {\rm diag}(k,k,\cdots,k,1).$
Then $A$ is regular and $F_A (P) \subset P.$ Therefore $F_A: \mathbb{R}^n \to \mathbb{R}^n$ is a diffeomorphism.

First we show that
\[k \alpha({\bf p},q,{\bf r},s) = \alpha(k{\bf p},q,k{\bf r},s)\]
for all $({\bf p},q,{\bf r},s) \in D.$ Let $(X_1, X_2, P)$ be a piecewise-continuous vector field defined by two constant vectors $({\bf p},q)$ and $({\bf r},s).$
By the definition of $\alpha$ in Theorem \ref{thm_canonical}, we have
 \[
 	\eta(X_1, X_2, P)(0) = \alpha(X_1(0), X_2(0)) = \alpha({\bf p},q, {\bf r},s).
 \]
On the other hand, the invariance of $\eta$ implies
\[
	(F_A)_*\eta(X_1, X_2, P)(0) = \eta((F_A)_*X_1, (F_A)_*X_2, A P)(0),
\]
which is
\[
	A \alpha({\bf p},q, {\bf r},s) = \alpha(k {\bf p},q, k {\bf r},s).
\]
By the tangency condition, we have $A \alpha({\bf p},q, {\bf r},s) = k \alpha({\bf p},q, {\bf r},s).$

Therefore, by Euler's homogeneous function theorem, we have $$\alpha({\bf p},q,{\bf r},s) = A({\bf p},q,{\bf r},s){\bf p} + B({\bf p},q,{\bf r},s){\bf r},$$ where $A$ and $B$ are matrix-valued functions that are $0$-homogeneous with respect to ${\bf p}, {\bf r}.$ Combined with the continuity of $A$ and $B$, we have $A({\bf p},q,{\bf r},s) = A(0,q,0,s)$ and $B({\bf p},q,{\bf r},s) = B(0,q,0,s).$ Therefore, $A$ and $B$ are functions of $q$ and $s$ only.

On the other hand, let $k>0$ be arbitrary and $A = {\rm diag}(1,1,\cdots,1,k).$ Then $A$ is regular and $F_A (P) \subset P.$ Proceeding similarly as before, we have $\alpha({\bf p},q,{\bf r},s) = \alpha({\bf p},k q, {\bf r},ks).$ Therefore, $A(q, s) = A( k q, k s)$ and $B(q, s) = B( k q, k s)$ for any $k>0.$ Thus, $A$ and $B$ are $0$-homogeneous with respect to $q$ and $s$.

Let $({\bf p},q)$ and $({\bf r},s)$ be linearly dependent. Because $q s < 0$, $({\bf p},q) = c_1({\bf k},l)$ and $({\bf r},s) = c_2({\bf k},l)$ for some ${\bf k}$, $l > 0,$ $c_1$ and $c_2$ with $c_1 c_2 < 0$. Let $M$ be a regular matrix defined by
$$
	M = \left( \begin{array}{cc}
		I_{n-1} & {\bf k} \\
		0 & l
	\end{array} \right).
$$
Then, $({\bf p},q)^T = M({\bf 0},c_1)^T$ and $({\bf r},s)^T = M({\bf 0},c_2)^T.$ Because $F_M (P) \subset P,$ we have 
\begin{eqnarray*}
M \alpha({\bf 0},c_1,{\bf 0},c_2) &=& \alpha\left(M({\bf 0},c_1)^T,M({\bf 0},c_2)^T\right) \\
						&=& \alpha({\bf p},q,{\bf r},s).
\end{eqnarray*}
 Therefore, we conclude that
\begin{eqnarray*}
	\alpha({\bf p},q,{\bf r},s) &=& M\alpha({\bf 0}, c_1, {\bf 0}, c_2)\\
						&=& M\left(A(c_1,c_2) {\bf 0} + B(c_1,c_2) {\bf 0} \right)= {\bf 0}.
\end{eqnarray*}
\end{proof}
\section{Proof of Main Theorem}\label{ch_of_fil}
In this section, we prove Main Theorem and thereby give a characterization of the Filippov convention.

The next lemma is the cornerstone of our current discussion. Essentially, it gives a characterization of the interpolation schemes of the Filippov type in cases in which the discontinuity surface is the hyperplane $P$.
\begin{lemma}\label{lem_calc}
	Let $\alpha({\bf p},q,{\bf r},s)$ be a vector-valued function from $D := \{({\bf p},q,{\bf r},s) \in {\mathbb R}^{2 n} \mid qs <0 \}$ to ${\mathbb R}^{n}$ satisfying the following conditions: 
	\begin{enumerate}
		\item $\alpha({\bf p},q,{\bf r},s) = A(q,s){\bf p}+B(q,s){\bf r},$ where $A$ and $B$ are matrix-valued functions that are $0$-homogeneous with respect to $q$ and $s$.
		\item For linearly dependent vectors $({\bf p},q)$ and $({\bf r},s)$, $\alpha({\bf p},q,{\bf r},s) = {\bf 0}.$
		\item For any ${\bf p}$ and sequences $\{q_m\}, \{s_m\}$ with $q_m \to 0$, $s_m \to 0$ as $m \to \infty$, $q_m s_m <0$, we have $\lim_{m \to \infty}\alpha({\bf p},q_m,{\bf p},s_m) = ({\bf p},0).$ 
	\end{enumerate}
Then we have \[\alpha({\bf p},q,{\bf r},s) = \left( \frac{s}{s-q}{\bf p} + \frac{q}{q-s}{\bf r}, 0 \right). \]
\end{lemma}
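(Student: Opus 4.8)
The plan is to pin down the matrix-valued coefficients $A(q,s)$ and $B(q,s)$ from condition~(1) explicitly, using conditions~(2) and~(3), and then substitute back. The three ingredients to be combined are: the vanishing on linearly dependent pairs, which will relate $A$ to $B$; the $0$-homogeneity of $A,B$ in $(q,s)$, which reduces them to functions of a ray; and the limiting requirement~(3), which — once homogeneity is taken into account — collapses into a pointwise normalization.

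\emph{First step, from condition (2).} Fix $(q,s)$ in the admissible set with $q\neq 0$. For every ${\bf p}\in{\mathbb R}^{n-1}$ the vectors $({\bf p},q)$ and $(\tfrac{s}{q}{\bf p},s)=\tfrac{s}{q}({\bf p},q)$ are linearly dependent, so condition~(2) together with the decomposition in~(1) gives $A(q,s){\bf p}+\tfrac{s}{q}B(q,s){\bf p}={\bf 0}$ for all ${\bf p}$, hence $A(q,s)=-\tfrac{s}{q}B(q,s)$ as matrices (in particular $A(q,0)=0$). Symmetrically, pairing $(\tfrac{q}{s}{\bf r},q)=\tfrac{q}{s}({\bf r},s)$ with $({\bf r},s)$ shows $B(q,s)=-\tfrac{q}{s}A(q,s)$ whenever $s\neq 0$ (in particular $B(0,s)=0$). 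At each use one must check that the arguments fed to ${\bf \alpha}_\Sigma$ lie in $D$, but this only concerns the constraints $qs\le0$ and $q\neq s$, which involve $q,s$ alone and are prescribed freely.

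\emph{Second step, from condition (3) and homogeneity.} Fix an admissible $(q,s)$ and apply condition~(3) with ${\bf r}={\bf p}$ and the constant-ratio sequences $q_m=q/m$, $s_m=s/m$, which are admissible since $q_m s_m=qs/m^2\le0$ and $q_m\neq s_m$. Because $A$ and $B$ are $0$-homogeneous in $(q,s)$, one has $A(q_m,s_m)=A(q,s)$ and $B(q_m,s_m)=B(q,s)$ for every $m$, so ${\bf \alpha}_\Sigma({\bf p},q_m,{\bf p},s_m)=(A(q,s)+B(q,s)){\bf p}$ is independent of $m$; since its limit must equal ${\bf p}$ for all ${\bf p}$, we conclude $A(q,s)+B(q,s)=I$ on the whole admissible set. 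Then, for $q\neq0$ and $s\neq0$, substituting $A=-\tfrac{s}{q}B$ yields $B(q,s)\tfrac{q-s}{q}=I$, so $B(q,s)=\tfrac{q}{q-s}I$ and $A(q,s)=I-B(q,s)=\tfrac{s}{s-q}I$; plugging into~(1) gives the claimed formula. The boundary cases follow from the first step: if $q=0$ then $B(0,s)=0$, so $A(0,s)=I$; if $s=0$ then $A(q,0)=0$, so $B(q,0)=I$; direct substitution again produces $\tfrac{s}{s-q}{\bf p}+\tfrac{q}{q-s}{\bf r}$.

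I do not expect any step to be a genuine obstacle. The one point that deserves care is the passage in the second step: recognizing that $0$-homogeneity upgrades the sequential condition~(3) from an asymptotic statement to the exact pointwise identity $A+B=I$, which is what makes the linear system for $A,B$ determined. A secondary (purely bookkeeping) issue is verifying admissibility of the vectors used in the first step, in particular handling $s=0$ and $q=0$ separately rather than dividing by them.
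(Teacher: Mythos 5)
Your proof is correct and follows essentially the same route as the paper: you use $0$-homogeneity together with condition (3) on the sequences $q/m,\ s/m$ to get $A(q,s)+B(q,s)=I$, and condition (2) on proportional pairs to get $qA(q,s)+sB(q,s)=O$, then solve. The only cosmetic difference is that the paper feeds the dependent vectors ${\bf p}=q{\bf e}_i$, ${\bf r}=s{\bf e}_i$ into condition (2), which yields $qA+sB=O$ uniformly and avoids your separate treatment of the cases $q=0$ and $s=0$.
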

\begin{proof}
Fix ${\bf p} \in {\mathbb R}^{n-1}.$ By the homogeneity, we have $$\alpha({\bf p},q,{\bf r},s) = \alpha\left({\bf p},\frac{q}{m},{\bf r}, \frac{s}{m}\right)$$ for any positive $m \in {\mathbb N}.$ Therefore, we have
\begin{eqnarray*}
	(A(q,s)+B(q,s)) {\bf p}&=& \alpha({\bf p},q,{\bf p},s) \\
							&=& \lim_{m \to \infty}\alpha({\bf p},\frac{q}{m},{\bf p}, \frac{s}{m})\\
								&=&({\bf p},0)^T,
\end{eqnarray*}
for any $q$ and $s$ with $q s <0.$ Because ${\bf p}$ is arbitrary, we obtain \[A(q,s)+B(q,s) = \left(\begin{array}{c} I \\ 0 \end{array}\right).\]

Let us take ${\bf p} = q {\bf e}_i $ and ${\bf r} = s {\bf e}_i$, where ${\bf e}_i$ is the $i$-th basis vector of ${\mathbb R}^{n-1}.$ Then $({\bf p},q)$ and $({\bf r},s)$ are linearly dependent and we have
$${\bf 0} = \alpha({\bf p},q,{\bf p},s) = qA(q,s){\bf e}_i + s B(q,s) {\bf e}_i.$$ Because $i$ is arbitrary, we conclude that $qA(q,s) + s B(q,s)$ is identically $O$. Therefore, we obtain \[
A(q,s) = \frac{s}{s-q} \left(\begin{array}{c} I \\ 0 \end{array}\right),\,B(q,s) = \frac{q}{q-s} \left(\begin{array}{c} I \\ 0 \end{array}\right).\]
The conclusion follows immediately.
\end{proof}
If the map $\alpha$ in Theorem \ref{thm_canonical} has the form of the Filippov convention, we can conclude that the interpolation scheme is Filippov type.
\begin{lemma}\label{lem_fil}
Let $\eta:PC(\mathbb{R}^n) \to Map(\mathbb{R}^n, \mathbb{R}^n)$ be a locally uniform invariant interpolation scheme. If the map $\alpha$ in Theorem \ref{thm_canonical} has the form
\[\alpha({\bf p},q,{\bf r},s) = \left( \frac{s}{s-q}{\bf p} + \frac{q}{q-s}{\bf r}, 0 \right), \]
the interpolation scheme $\eta$ is Filippov type.
\end{lemma}
\begin{proof}
	Let $(X_1,X_2,\Sigma)$ be a piecewise-continuous vector field and let $x \in \Sigma$ be a point with $X_{1N}(x) X_{2N}(x)<0.$ Let $\Phi$ as in Theorem \ref{thm_canonical} and $(\mathbf{u}, q) = (d\Phi) X_1(x)$ and $(\mathbf{v}, s) = (d\Phi) X_2(x).$ Because $X_i(x) - X_{iN}(x) \mathbf{n}(x)$ is tangent to $\Sigma$ at $x$ for $i=1,2,$ we have
	\[
		0 = \mathbf{n}'(\Phi(x)) \cdot (d\Phi)_x \left(X_i(x) - X_{iN}(x) \mathbf{n}(x) \right),
	\]
for $i=1,2,$ where $\mathbf{n}'$ is the normal vector of $P.$ Therefore we have
\[
	\begin{aligned}
		q &= X_{1N}(x) (d\Phi)_x\mathbf{n}(x) \cdot \mathbf{n}'(\Phi(x))\\
		s &= X_{2N}(x) (d\Phi)_x \mathbf{n}(x) \cdot \mathbf{n}'(\Phi(x)).
	\end{aligned}
\]
We note that $(d\Phi)_x\mathbf{n}(x) \cdot \mathbf{n}'(\Phi(x)) \neq 0$ because $(d\Phi)_x$ never maps a unit normal vector to a tangent vector.
Therefore we can calculate as follows:
	\[
		\begin{aligned}
			\eta(X_1,X_2,\Sigma)(x) &= d(\Phi^{-1}) \alpha\left((d\Phi) X_1(x), (d\Phi )X_2(x)\right)\\
								& = d(\Phi^{-1}) \left( \frac{s}{s-q}{\bf u} + \frac{q}{q-s}{\bf v}, 0 \right)\\
								& = \frac{s}{s-q} d(\Phi^{-1}) \left( {\bf u} , q \right) + \frac{q}{q-s} d(\Phi^{-1}) \left( {\bf v} , s \right)\\
								& = \frac{X_{2N}(x)}{X_{2N}(x)-X_{1N}(x)} X_1(x)+ \frac{X_{1N}(x)}{X_{1N}(x)-X_{2N}(x)} X_2(x).										\end{aligned}
	\]
Therefore, the interpolation scheme $\eta$ is Filippov type.
\end{proof}
The main theorem follows using the results obtained above. 
\begin{proof}[Proof of the Main Theorem]
By Corollary \ref{cor_rep}, the map $\alpha$ satisfies the first two conditions in Lemma \ref{lem_calc}.

The third condition also holds because $\eta$ satisfies the continuity condition.

Thus, the three conditions in Lemma \ref{lem_calc} are satisfied for $\alpha.$ Therefore, $$\alpha({\bf p},q,{\bf r},s) = \left( \frac{s}{s-q}{\bf p} + \frac{q}{q-s}{\bf r}, 0 \right).$$ From Lemma \ref{lem_fil}, we conclude that the interpolation scheme $\eta$ is Filippov type.

\end{proof}

\section*{Acknowledgment}
I would like to express my gratitude to Professor Masashi Kisaka for his critique of this research. This study was supported by a Grant-in-Aid for JSPS Fellows (17J03931, 20J01101).

\bibliographystyle{plain}
\bibliography{optimality_of_filippov_}

\end{document}